\newtheorem{theorem}{Theorem}[section]
\newtheorem{lemma}[theorem]{Lemma}
\newtheorem{open}{Open problem}
\theoremstyle{definition}
\newcommand{\baa}{\begin{array}}
\newcommand{\eaa}{\end{array}}
\newcommand{\ba}{\begin{eqnarray}}
\newcommand{\ea}{\end{eqnarray}}
\newcommand{\be}{\begin{equation}}
\newcommand{\ee}{\end{equation}}
\def\N{\mathbb{N}}
\def\R{\mathbb{R}}
\def\epsilon{\varepsilon}
\def\1{\mathbbm{1}}
\newenvironment{formula}[1]{\begin{equation}\label{#1}}{\end{equation}\noindent}
\def\Fi#1{\begin{formula}{#1}}
\def\Ff{\end{formula}\noindent}
\newcommand{\SE}{\setcounter{equation}{0} \section}
\begin{document}

\title{{\bf A Faber-Krahn inequality for the Laplacian with drift under Robin boundary condition} \thanks{This work has been supported by the French Agence Nationale de la Recherche (ANR), in the framework of the ReaCh project ANR-23-CE40-0023-02.}}

\author{Fran{\c{c}}ois Hamel$^{\,\hbox{\small{a}}}$, Emmanuel Russ$^{\,\hbox{\small{a}}}$\\
\\
\footnotesize{$^{\,\hbox{\small{a}}}$ Aix Marseille Univ, CNRS, Centrale Marseille, I2M, Marseille, France}}
\date{}

\maketitle

\begin{abstract}
We prove a Faber-Krahn inequality for the Laplacian with drift under Robin boundary condition, provided that the $\beta$ parameter in the Robin condition is large enough. The proof relies on a compactness argument, on the convergence of Robin eigenvalues to Dirichlet eigenvalues when $\beta$ goes to infinity, and on a strict Faber-Krahn inequality under Dirichlet boundary condition. We also show the existence and uniqueness of drifts $v$ satisfying some $L^\infty$ constraints and minimizing or maximizing the principal eigenvalue of $-\Delta+v\cdot\nabla$ in a fixed domain and with a fixed parameter $\beta>0$ in the Robin condition.
\end{abstract}

\tableofcontents

\SE{Introduction}\label{intro}

Throughout this paper, $d\ge 1$ is an integer. For all $x\in \R^d$, denote by $\left\vert x\right\vert$ the Euclidean norm of $x$ and define
$$
e_r(x):=\frac{x}{\left\vert x\right\vert}\mbox{ for all }x\in\R^d\setminus \left\{0\right\}.
$$
\noindent Let $\Omega\subset \R^d$ be a bounded domain (connected open set) of class $C^2$, with outward unit normal $\nu$ on $\partial\Omega$. If $v\in L^{\infty}(\Omega,\R^d)$ is a bounded measurable vector field, set
$$
\left\Vert v\right\Vert_\infty:=\left\Vert \left\vert v\right\vert\right\Vert_\infty.
$$

We are interested in the principal eigenvalue of the operator $-\Delta+v\cdot\nabla$ in~$\Omega$ under Robin boundary condition on $\partial\Omega$. More precisely, let $\beta>0$. Then, by~\cite[Theorem~A.4]{YD} and Krein-Rutman theory~\cite{AEG}, let $\lambda_1^{\beta}(\Omega,v)$ denote the principal eigenvalue of the problem
\be\label{ev}
\left\{
\begin{array}{ll}
-\Delta \varphi^\beta_{\Omega,v}+v\cdot\nabla \varphi^\beta_{\Omega,v}=\lambda_1^{\beta}(\Omega,v) \varphi^\beta_{\Omega,v} & \mbox{ in }\Omega,\\
\displaystyle\frac{\partial \varphi^\beta_{\Omega,v}}{\partial\nu}+\beta \varphi^\beta_{\Omega,v}=0 & \mbox{ on }\partial\Omega.
\end{array}
\right.
\ee
This principal eigenvalue is simple, the corresponding eigenfunction $\varphi^\beta_{\Omega,v}$ is positive in~$\Omega$ and none of the other eigenvalues corresponds to a positive eigenfunction (see the discussion after \cite[Theorem 1.3]{YD}). By $W^{2,p}$ elliptic regularity (\cite[Theorem~A.29]{YD}), the function $\varphi^\beta_{\Omega,v}$ belongs to $W^{2,p}(\Omega)$ for all $p\in [1,\infty)$ and then to $C^{1,\alpha}(\overline{\Omega})$ for all $\alpha\in (0,1)$. The first line in~\eqref{ev} is therefore understood almost everywhere in~$\Omega$. The Hopf lemma shows that $\varphi^\beta_{\Omega,v}>0$ on $\partial\Omega$, which, in turn, implies that
$$
\min_{\overline{\Omega}} \varphi^\beta_{\Omega,v}>0.
$$ 
We usually normalize $\varphi^\beta_{\Omega,v}$ by
\begin{equation} \label{eq:normal}
\max_{\overline{\Omega}} \varphi^\beta_{\Omega,v}=1.
\end{equation}
Moreover, there holds
$$\lambda_1^\beta(\Omega,v)>0.$$

Fix $\tau\ge 0$ and $m>0$. We are interested in the infimum of $\lambda_1^\beta(\Omega,v)$ when $\Omega$ and $v$ vary under the constraints $\left\vert \Omega\right\vert=m$ (throughout the paper, $|A|$ denotes the $n$-dimensional Lebesgue measure of $A$ for all measurable sets $A\subset \R^d$) and 
\begin{equation} \label{eq:vtau}
\left\Vert v\right\Vert_\infty\le \tau.
\end{equation}
In the sequel, $\Omega^{\ast}$ stands for the Euclidean ball centered at $0$ such that $\left\vert \Omega^\ast\right\vert=\vert\Omega\vert$.

Our main result states that, when $\Omega$ is not a ball, $\lambda_1^\beta(\Omega,v)$ is (strictly) greater than the corresponding quantity in $\Omega^\ast$ when $v=\tau\,e_r$, provided that $\beta$ is large enough:

\begin{theorem} \label{th:main}
Let $\Omega\subset\R^d$ be a bounded $C^2$ domain and $\tau\ge 0$. Assume that $\Omega$ is not a ball. Then there exist $\beta_0>0$ and $\varepsilon>0$ with the following property:
\be\label{ineqlambdav}
\forall\,\beta\ge \beta_0,\ \forall\,v\in L^{\infty}(\Omega,\R^d)\mbox{ such that } \left\Vert v\right\Vert_\infty\le \tau,\ \lambda_1^\beta(\Omega,v)\ge \lambda_1^\beta(\Omega^\ast,\tau e_r)+\varepsilon.
\ee
\end{theorem}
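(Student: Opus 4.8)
The plan is to argue by contradiction, combining a compactness argument with the convergence of Robin eigenvalues to Dirichlet eigenvalues as $\beta\to+\infty$ and the strict Dirichlet Faber-Krahn inequality with drift. Write $\lambda_1^D(\Omega,v)$ for the principal eigenvalue of $-\Delta+v\cdot\nabla$ in $\Omega$ under Dirichlet boundary condition, which is the limit of $\lambda_1^\beta(\Omega,v)$ as $\beta\to+\infty$. If the conclusion fails, negating the statement and choosing $\beta_0=n$, $\varepsilon=1/n$ produces $\beta_n\to+\infty$ and $v_n\in L^\infty(\Omega,\R^d)$ with $\|v_n\|_\infty\le\tau$ such that
$$\lambda_1^{\beta_n}(\Omega,v_n)<\lambda_1^{\beta_n}(\Omega^\ast,\tau e_r)+\tfrac1n.$$
Since $\lambda_1^{\beta_n}(\Omega^\ast,\tau e_r)\to\lambda_1^D(\Omega^\ast,\tau e_r)$ and $\lambda_n:=\lambda_1^{\beta_n}(\Omega,v_n)$ is bounded (positive, and bounded above by the right-hand side), after extraction $\lambda_n\to\lambda_\infty$, and the displayed inequality forces $\lambda_\infty\le\lambda_1^D(\Omega^\ast,\tau e_r)$. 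The whole point is then to prove the matching bound $\lambda_\infty=\lambda_1^D(\Omega,v_\infty)$ for a suitable admissible limit drift $v_\infty$, which together with the strict inequality $\lambda_1^D(\Omega,v_\infty)>\lambda_1^D(\Omega^\ast,\tau e_r)$ (valid because $\Omega$ is not a ball) yields the contradiction.

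Next I would set up the compactness. Normalizing by $\int_\Omega\varphi_n^2=1$ (rather than by \eqref{eq:normal}, so as to keep the limit nontrivial), I set $\varphi_n:=\varphi^{\beta_n}_{\Omega,v_n}$ and test the weak formulation of \eqref{ev} against $\varphi_n$, obtaining
$$\int_\Omega|\nabla\varphi_n|^2+\beta_n\int_{\partial\Omega}\varphi_n^2+\int_\Omega(v_n\cdot\nabla\varphi_n)\,\varphi_n=\lambda_n.$$
Bounding the drift term by $\tau\|\nabla\varphi_n\|_{L^2}\le\frac12\|\nabla\varphi_n\|_{L^2}^2+\frac{\tau^2}{2}$ and using that $\lambda_n$ is bounded, I get a uniform $H^1(\Omega)$ bound on $\varphi_n$ together with $\beta_n\int_{\partial\Omega}\varphi_n^2\le C$, hence $\int_{\partial\Omega}\varphi_n^2\to0$. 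Up to a further extraction, $v_n\rightharpoonup v_\infty$ weakly-$\ast$ in $L^\infty$ with $\|v_\infty\|_\infty\le\tau$, and $\varphi_n\rightharpoonup\varphi_\infty$ weakly in $H^1(\Omega)$, strongly in $L^2(\Omega)$, so $\varphi_\infty\ge0$ and $\int_\Omega\varphi_\infty^2=1$. By compactness of the trace operator $H^1(\Omega)\to L^2(\partial\Omega)$, the traces converge in $L^2(\partial\Omega)$, whence $\int_{\partial\Omega}\varphi_n^2\to0$ forces $\varphi_\infty=0$ on $\partial\Omega$, i.e. $\varphi_\infty\in H^1_0(\Omega)$; this is where $\beta_n\to+\infty$ converts the Robin condition into the Dirichlet one.

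The crux is to pass to the limit in the equation, and in particular in the product $v_n\cdot\nabla\varphi_n$, whose two factors converge only weakly. The remedy is interior elliptic regularity: writing $-\Delta\varphi_n=\lambda_n\varphi_n-v_n\cdot\nabla\varphi_n$ with right-hand side bounded in $L^2(\Omega)$, the sequence $\varphi_n$ is bounded in $H^2_{\mathrm{loc}}(\Omega)$, so $\nabla\varphi_n\to\nabla\varphi_\infty$ strongly in $L^2_{\mathrm{loc}}(\Omega)$. For $\psi\in C_c^\infty(\Omega)$ the boundary term drops out, and one passes to the limit term by term: the strong local convergence of $\nabla\varphi_n$ against the weak-$\ast$ convergence of $v_n$ gives $\int_\Omega(v_n\cdot\nabla\varphi_n)\psi\to\int_\Omega(v_\infty\cdot\nabla\varphi_\infty)\psi$. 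Hence $\varphi_\infty\in H^1_0(\Omega)$ is a nonnegative, nontrivial weak solution of $-\Delta\varphi_\infty+v_\infty\cdot\nabla\varphi_\infty=\lambda_\infty\varphi_\infty$ under Dirichlet condition; by the strong maximum principle it is positive in $\Omega$, and since only the principal eigenvalue admits a positive eigenfunction, $\lambda_\infty=\lambda_1^D(\Omega,v_\infty)$.

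Finally, invoking the strict Faber-Krahn inequality under Dirichlet boundary condition, which asserts that for $\Omega$ not a ball one has $\lambda_1^D(\Omega,w)>\lambda_1^D(\Omega^\ast,\tau e_r)$ for every $w$ with $\|w\|_\infty\le\tau$, I apply it to $w=v_\infty$ to obtain $\lambda_\infty>\lambda_1^D(\Omega^\ast,\tau e_r)$, contradicting $\lambda_\infty\le\lambda_1^D(\Omega^\ast,\tau e_r)$. I expect the main obstacle to be precisely the joint passage to the limit in the third paragraph: reconciling the two weak convergences ($v_n$ weak-$\ast$ and $\nabla\varphi_n$ weak) through interior regularity, while simultaneously tracking the boundary energy $\beta_n\int_{\partial\Omega}\varphi_n^2$ that pins the limit into $H^1_0(\Omega)$. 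A secondary point requiring care is the verification that the limit eigenfunction is genuinely nontrivial and nonnegative, which is what permits its identification with the principal Dirichlet eigenpair.
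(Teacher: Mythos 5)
Your proposal is correct, and while it follows the same overall strategy as the paper (contradiction, compactness, conversion of the Robin condition into the Dirichlet one as $\beta_n\to+\infty$, and the strict Dirichlet Faber--Krahn inequality with drift), the technical core of the limit passage is genuinely different. The paper keeps the sup-normalization, derives a \emph{global} $W^{2,p}(\Omega)$ bound on $\varphi_k$ via the Agmon--Douglis--Nirenberg boundary estimates for the oblique condition $\frac{1}{\beta_k}\frac{\partial\varphi_k}{\partial\nu}=-\varphi_k$, and only extracts a weak-$\ast$ limit $f$ of the products $v_k\cdot\nabla\varphi_k$; it then retains just the one-sided inequality $f\ge-\tau|\nabla\varphi|$ (legitimate because $\nabla\varphi_k\to\nabla\varphi$ in $C^0(\overline\Omega)$), builds the extremal drift $v=-\tau\nabla\varphi/|\nabla\varphi|$ from the limit function, and invokes the sub/supersolution comparison lemma of \cite{HNRpreprint} to conclude $\lambda_1^D(\Omega,v)\le\mu$. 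You instead extract a weak-$\ast$ limit $v_\infty$ of the drifts themselves and pass to the limit \emph{in the product} by pairing the weak-$\ast$ convergence of $v_n$ with the strong $L^2_{\mathrm{loc}}$ convergence of $\nabla\varphi_n$ obtained from interior $H^2$ estimates; this yields an exact Dirichlet eigenvalue identity $\lambda_\infty=\lambda_1^D(\Omega,v_\infty)$ (using that only the principal eigenvalue admits a positive eigenfunction), to which the strict Faber--Krahn inequality applies directly. Your route is more elementary on the regularity side (only a global $H^1$ bound plus interior regularity, no boundary $W^{2,p}$ estimates) and dispenses with the comparison lemma, at the price of the slightly more delicate weak-$\ast$-against-strong-local limit in the drift term and of checking that $\|v_\infty\|_\infty\le\tau$ survives the weak-$\ast$ limit; the paper's route avoids identifying the limit of the product altogether, which is why it only needs a differential inequality and a comparison principle. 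Both arguments are complete and reach the same contradiction.
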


When $\tau=0$, {\it i.e.} when $-\Delta+v\cdot\nabla=-\Delta$ is merely (minus) the Laplacian, it was proved in \cite{Bo,BD,D,DK} that, for all $\beta>0$,
\be\label{ineqlambda0}
\lambda_1^\beta(\Omega,0)\ge \lambda_1^\beta(\Omega^{\ast},0)
\ee
and equality holds if and only if $\Omega=\Omega^{\ast}$ up to translation. When $\tau\neq0$, Theorem~\ref{th:main} provides on the one hand a quantified strict inequality if $\Omega$ is not a ball, but the conclusion is only established above some threshold for $\beta$, contrary to \cite[Theorem~1.1]{BD}, and it actually can not hold for all $\beta>0$, since
$$
\lim_{\beta\rightarrow 0} \lambda_1^\beta(\Omega,v)=\lim_{\beta\rightarrow 0} \lambda_1^\beta(\Omega^\ast,\tau e_r)=0
$$
for each $v\in L^\infty(\Omega,\R^d)$ (as follows from Lemma~\ref{lem:robindir} below). On the other hand, when $\Omega=\Omega^{\ast}$, the uniqueness part in Theorem~\ref{th:fixedomega} below ensures that, for all $v\in L^{\infty}(\Omega^\ast,\R^d)$ with $\left\Vert v\right\Vert\le\tau$, if $v\ne\tau e_r$, then $\lambda_1^{\beta}(\Omega^\ast,v)>\lambda_1^{\beta}(\Omega^\ast,\tau e_r)$ for all $\beta>0$.

The following question nevertheless remains open:

\begin{open}
Let $\Omega\subset\R^d$ be a bounded $C^{2}$ domain, $\tau\ge 0$ and $v\in L^{\infty}(\Omega,\R^d)$ with $\left\Vert v\right\Vert_\infty\le\tau$. Does the inequality
$$
\lambda_1^\beta(\Omega,v)\ge \lambda_1^\beta(\Omega^{\ast},\tau e_r)
$$
hold for all $\beta>0$$\,$?
\end{open}

Recall that, under Dirichlet boundary condition, it was proved in~\cite[Theorem~1.1]{HNRpreprint} and \cite[Remark 6.9]{HNR} that, whenever \eqref{eq:vtau} holds, 
\be\label{ineqlambdaD}
\lambda_1^D(\Omega,v)\ge \lambda_1^D(\Omega^{\ast},\tau e_r),
\ee
where $\lambda_1^D(\Omega,v)$ stands for the principal eigenvalue of $-\Delta+v\cdot\nabla$ under Dirichlet boundary condition. Moreover, equality holds in~\eqref{ineqlambdaD} if and only if, up to translation, $\Omega=\Omega^{\ast}$ and $v=\tau e_r$. The inequalities~\eqref{ineqlambdav}-\eqref{ineqlambdaD} are called Faber-Krahn type inequalities. This terminology originates from the results of Faber~\cite{F} and Krahn~\cite{K1,K2}, who proved that
$$\lambda_1^D(\Omega,0)\ge\lambda_1^D(\Omega^\ast,0),$$
with equality if and only if, up to translation, $\Omega=\Omega^\ast$. The latter inequality means that a radially symmetric membrane which is fixed at its boundary has the lowest fundamental tone among all equimeasurable membranes, answering a conjecture of Rayleigh~\cite{R} set in dimension $d=2$. Since these pioneering papers, much work has been done on various related optimization eigenvalue problems for elliptic operators, for instance on higher eigenvalues or functions of the eigenvalues of $-\Delta$ under Dirichlet boundary condition~\cite{AB1,AB3,AHS,BH,CO,PPW,P1,P2,WK}, under Neumann boun\-dary condition~\cite{P2,S,W}, or for the first eigenvalue of $\Delta^2$ under boundary conditions $\varphi=\frac{\partial\varphi}{\partial\nu}=0$ on $\partial\Omega$~\cite{AB4,N}. We refer to the surveys~\cite{B,H} for many more references on these topics.

The second main result deals with an optimization problem when the domain $\Omega$ is fixed and $v$ varies under the constraint~\eqref{eq:vtau}. Define, for all $\beta>0$ and $\tau\ge 0$ given:
\be\label{underlambda}
\underline{\lambda}^\beta(\Omega,\tau):=\inf\left\{\lambda_1^\beta(\Omega,v):\left\Vert v\right\Vert_\infty\le \tau\right\}
\ee
and
\be\label{overlambda}
\overline{\lambda}^\beta(\Omega,\tau):=\sup\left\{\lambda_1^\beta(\Omega,v):\left\Vert v\right\Vert_\infty\le \tau\right\}.
\ee
We claim that these lower and upper bounds are positive real numbers, are uniquely reached and provide an identity relating the optimizing vector fields and the corresponding eigenfunctions:

\begin{theorem} \label{th:fixedomega} [Optimization in fixed domains]
Let $\Omega\subset \R^d$ be a bounded $C^2$ domain, $\tau\ge 0$ and $\beta>0$.
\begin{enumerate}
\item There exists a unique $\underline{v}\in L^{\infty}(\Omega,\R^d)$ meeting $\left\Vert\underline{v}\right\Vert_\infty\le \tau$ such that $\underline{\lambda}^\beta(\Omega,\tau)=\lambda_1^\beta(\Omega,\underline{v})$. One has $\left\vert \underline{v}(x)\right\vert=\tau$ for almost every $x\in \Omega$. Moreover, if $\underline{\varphi}:=\varphi^\beta_{\Omega,\underline{v}}$ is the corresponding eigenfunction, then
\begin{equation} \label{eq:minvgradphi}
\underline{v}(x)\cdot\nabla\underline{\varphi}(x)=-\tau|\nabla\underline{\varphi}(x)|\mbox{ for almost every }x\in \Omega.
\end{equation}
Lastly, if $\lambda\in\R$ and $\phi\in\bigcap_{1\le p<\infty}W^{2,p}(\Omega)$ satisfy
\be\label{eqphi}
\left\{
\begin{array}{ll}
-\Delta \phi-\tau\,|\nabla\phi|=\lambda\phi\ \hbox{ and }\ \phi\ge0 & \mbox{in }\Omega,\\
\displaystyle\frac{\partial \phi}{\partial\nu}+\beta \phi=0 & \mbox{on }\partial\Omega,
\end{array}
\right.
\ee
and $\max_{\overline{\Omega}}\phi=1$, then $\lambda=\underline{\lambda}^\beta(\Omega,\tau)$ and $\phi=\underline{\varphi}$ in $\overline{\Omega}$. Notice that, from elliptic regularity theory applied to~\eqref{eqphi}, since $\phi$ and $|\nabla\phi|$ belong to $C^{0,\alpha}(\overline{\Omega})$ for all $\alpha\in(0,1)$, the function $\phi$ belongs to $C^{2,\alpha}_{loc}(\Omega)$ for all $\alpha\in(0,1)$, and the first line of~\eqref{eqphi} holds in the classical sense in $\Omega$.
\item Similarly, there exists a unique $\overline{v}\in L^{\infty}(\Omega,\R^d)$ meeting $\left\Vert\overline{v}\right\Vert_\infty\le \tau$ such that $\overline{\lambda}^\beta(\Omega,\tau)=\lambda_1^\beta(\Omega,\overline{v})$. One has $\left\vert \overline{v}(x)\right\vert=\tau$ for almost every $x\in \Omega$. Moreover, if $\overline{\varphi}:=\varphi^\beta_{\Omega,\overline{v}}$ is the corresponding eigenfunction, then
\begin{equation} \label{eq:maxvgradphi}
\overline{v}(x)\cdot\nabla\overline{\varphi}(x)=\tau\left\vert \nabla\overline{\varphi}(x)\right\vert\mbox{ for almost every }x\in \Omega.
\end{equation}
Lastly, if $\lambda\in\R$ and $\phi\in\bigcap_{1\le p<\infty}W^{2,p}(\Omega)$ satisfy
\be\label{eqphi2}
\left\{
\begin{array}{ll}
-\Delta \phi+\tau\,|\nabla\phi|=\lambda\phi \ \hbox{ and }\ \phi\ge0 & \mbox{in }\Omega,\\
\displaystyle\frac{\partial \phi}{\partial\nu}+\beta \phi=0 & \mbox{on }\partial\Omega,
\end{array}
\right.
\ee
and $\max_{\overline{\Omega}}\phi=1$, then $\lambda=\overline{\lambda}^\beta(\Omega,\tau)$ and $\phi=\overline{\varphi}$ in $\overline{\Omega}$. As for~\eqref{eqphi}, the function~$\phi$ then belongs to  $C^{2,\alpha}_{loc}(\Omega)$ for all $\alpha\in(0,1)$, and the first line of~\eqref{eqphi2} holds in the classical sense in $\Omega$.
\item If $\Omega=\Omega^\ast$, then $\underline{v}=\tau\,e_r$, $\overline{v}=-\tau\,e_r$ in $\Omega^\ast$ and the functions $\underline{\varphi}$ and $\overline{\varphi}$ are radially decreasing in $\Omega^{\ast}$.
\end{enumerate}
\end{theorem}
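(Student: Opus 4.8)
The plan is to establish item~1 in detail; item~2 is entirely symmetric (replace $-\tau$ by $+\tau$ throughout), and item~3 follows from items~1--2 combined with the rotational invariance of the problem. Two tools are used repeatedly. First, a sub/supersolution characterization of the principal eigenvalue: writing $L_v:=-\Delta+v\cdot\nabla$, if $\psi\in W^{2,d}(\Omega)$ is positive on $\overline{\Omega}$, satisfies the Robin condition $\partial_\nu\psi+\beta\psi=0$, and obeys $L_v\psi\ge\mu\psi$ (resp. $\le\mu\psi$) a.e., then $\lambda_1^\beta(\Omega,v)\ge\mu$ (resp. $\le\mu$). This is proved by pairing with the positive adjoint eigenfunction $\varphi^\ast$, which exists by Krein--Rutman and solves the adjoint problem with boundary condition $\partial_\nu\varphi^\ast+(\beta+v\cdot\nu)\varphi^\ast=0$, the boundary terms cancelling exactly because of this adjoint condition. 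Second, the pointwise heuristic $\min_{|v|\le\tau}v\cdot\nabla\phi=-\tau|\nabla\phi|$, which suggests that $\underline{\lambda}^\beta(\Omega,\tau)$ is the principal eigenvalue of the Bellman equation~\eqref{eqphi}. Throughout I abbreviate $\underline{\lambda}:=\underline{\lambda}^\beta(\Omega,\tau)$ and $\overline{\lambda}:=\overline{\lambda}^\beta(\Omega,\tau)$.

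For existence I would take a minimizing sequence $(v_n)$ with $\|v_n\|_\infty\le\tau$ and $\lambda_1^\beta(\Omega,v_n)\to\underline{\lambda}$. The subsolution characterization applied to $\psi=\varphi^\beta_{\Omega,0}$ gives the uniform bound $\lambda_1^\beta(\Omega,v)\le\lambda_1^\beta(\Omega,0)+\tau\,\|\,|\nabla\psi|/\psi\,\|_\infty<\infty$, so $\underline{\lambda},\overline{\lambda}$ are finite. By $W^{2,p}$ regularity the normalized eigenfunctions $\varphi_n:=\varphi^\beta_{\Omega,v_n}$ are bounded in every $W^{2,p}(\Omega)$, hence up to extraction $\varphi_n\to\underline{\varphi}$ in $C^1(\overline{\Omega})$ and $v_n\overset{\ast}{\rightharpoonup}\underline{v}$ in $L^\infty$, with $\|\underline{v}\|_\infty\le\tau$ by weak-$\ast$ lower semicontinuity. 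Strong $C^0$ convergence of $\nabla\varphi_n$ against weak-$\ast$ convergence of $v_n$ lets one pass to the limit in the weak formulation, so $\underline{\varphi}$ is a nonnegative, nonzero ($\max\underline{\varphi}=1$) solution of~\eqref{ev} for $\underline{v}$; by simplicity $\lambda_1^\beta(\Omega,\underline{v})=\underline{\lambda}>0$ and $\underline{v}$ is a minimizer. For the optimality relation, I claim $\underline{v}\cdot\nabla\underline{\varphi}=-\tau|\nabla\underline{\varphi}|$ on $\{\nabla\underline{\varphi}\neq0\}$: otherwise, on a positive-measure set $E\subset\{\nabla\underline{\varphi}\neq0\}$ where $\underline{v}\cdot\nabla\underline{\varphi}>-\tau|\nabla\underline{\varphi}|$, replacing $\underline{v}$ by $-\tau\nabla\underline{\varphi}/|\nabla\underline{\varphi}|$ yields $\tilde v$ with $L_{\tilde v}\underline{\varphi}\le\underline{\lambda}\,\underline{\varphi}$ and strict inequality on $E$; pairing with the positive adjoint eigenfunction forces $\lambda_1^\beta(\Omega,\tilde v)<\underline{\lambda}$, a contradiction. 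Thus $\underline{\varphi}$ solves~\eqref{eqphi} a.e. (trivially where $\nabla\underline{\varphi}=0$), which is~\eqref{eq:minvgradphi}. To obtain $|\underline{v}|=\tau$ a.e.\ I show the critical set $S:=\{\nabla\underline{\varphi}=0\}$ is null: since each $\partial_i\underline{\varphi}\in W^{1,p}(\Omega)$ vanishes on $S$, one has $D^2\underline{\varphi}=0$, hence $\Delta\underline{\varphi}=0$, a.e.\ on $S$, whereas~\eqref{eqphi} gives $\Delta\underline{\varphi}=-\underline{\lambda}\,\underline{\varphi}<0$ a.e.\ on $S$; so $|S|=0$, and on $\Omega\setminus S$ equality in Cauchy--Schwarz forces $\underline{v}=-\tau\nabla\underline{\varphi}/|\nabla\underline{\varphi}|$, giving both $|\underline{v}|=\tau$ a.e.\ and a formula recovering $\underline{v}$ from $\underline{\varphi}$.

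The crux is the ``Lastly'' statement, which yields uniqueness, and it is the step I expect to be the main obstacle. Let $(\lambda,\phi)$ solve~\eqref{eqphi} with $\phi\ge0$, $\max_{\overline{\Omega}}\phi=1$. Setting $b:=-\tau\nabla\phi/|\nabla\phi|$ where $\nabla\phi\neq0$ (and $b:=0$ elsewhere), $\phi$ is a nonnegative nonzero solution of the linear problem~\eqref{ev} for the admissible drift $b$; by simplicity and positivity of principal eigenvalues, $\phi>0$, $\phi=\varphi^\beta_{\Omega,b}$ and $\lambda=\lambda_1^\beta(\Omega,b)>0$, whence $\lambda\ge\underline{\lambda}$. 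Conversely, $v\cdot\nabla\phi\ge-\tau|\nabla\phi|$ makes $\phi$ a positive supersolution for every admissible $v$, so $\lambda\le\lambda_1^\beta(\Omega,v)$ for all such $v$ and $\lambda\le\underline{\lambda}$; hence $\lambda=\underline{\lambda}$. To get $\phi=\underline{\varphi}$ I use that $\phi\mapsto-\Delta\phi-\tau|\nabla\phi|$ is positively $1$-homogeneous. Let $t^\ast:=\max_{\overline{\Omega}}(\phi/\underline{\varphi})\in(0,\infty)$, so $w:=t^\ast\underline{\varphi}-\phi\ge0$ vanishes at some $x_0\in\overline{\Omega}$. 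By homogeneity $t^\ast\underline{\varphi}$ solves the same equation, and convexity of $p\mapsto|p|$ linearized at $\phi$ yields $-\Delta w+\hat b\cdot\nabla w-\underline{\lambda}\,w\ge0$ a.e.\ for a bounded drift $\hat b$; since the zero-order coefficient $-\underline{\lambda}$ is nonpositive and $w$ attains its minimum value $0$, the strong maximum principle (if $x_0\in\Omega$), or the Hopf lemma together with the common condition $\partial_\nu w+\beta w=0$ giving $\partial_\nu w(x_0)=0$ (if $x_0\in\partial\Omega$), forces $w\equiv0$; normalization then gives $t^\ast=1$ and $\phi=\underline{\varphi}$. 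This makes the minimizer unique: any minimizer's eigenfunction solves~\eqref{eqphi}, hence equals $\underline{\varphi}$, hence the minimizer equals $-\tau\nabla\underline{\varphi}/|\nabla\underline{\varphi}|$ a.e. The difficulty here is precisely this comparison, since the term $|\nabla\phi|$ is non-smooth (only a bounded measurable linearizing drift is available) and a boundary touching point must be handled under the Robin condition.

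Item~2 follows by replacing $-\tau$ with $+\tau$: the maximizer satisfies~\eqref{eq:maxvgradphi}, $\overline{\varphi}$ solves~\eqref{eqphi2}, and in the characterization $\phi$ is now a subsolution for every admissible $v$ while $\phi=\varphi^\beta_{\Omega,+\tau\nabla\phi/|\nabla\phi|}$, giving $\lambda=\overline{\lambda}$; the homogeneity/comparison argument is unchanged. For item~3 with $\Omega=\Omega^\ast$, rotational invariance of~\eqref{eqphi} and of the ball, combined with the uniqueness just proved, shows $\underline{\varphi}$ is radial: for every rotation $R$, $x\mapsto\underline{\varphi}(Rx)$ again solves~\eqref{eqphi} with the same $\lambda$ and normalization, hence coincides with $\underline{\varphi}$. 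Writing $\underline{\varphi}=g(|x|)$, the profile solves $-g''-\tfrac{d-1}{r}g'-\tau|g'|=\underline{\lambda}\,g$ with $g'(0)=0$, and at each critical radius $g''<0$, so every critical point is a strict local maximum; this forces $g'<0$ on $(0,R]$, i.e.\ $\underline{\varphi}$ is radially decreasing. Then $\nabla\underline{\varphi}=g'\,e_r$ points inward and~\eqref{eq:minvgradphi} gives $\underline{v}=\tau e_r$; the same reasoning applied to~\eqref{eqphi2} gives $\overline{v}=-\tau e_r$ with $\overline{\varphi}$ radially decreasing.
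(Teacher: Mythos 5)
Your proposal reaches all the stated conclusions and is essentially correct, but it is built around a different central tool than the paper's. The paper runs everything through a single comparison lemma (Lemma~\ref{lem:compar}): a positive supersolution and a nonnegative subsolution of $-\Delta+v\cdot\nabla-\mu$, both sup-normalized and satisfying matching Robin boundary inequalities, must coincide; this is proved by the sliding argument $\gamma^\ast=\inf\{\gamma:\gamma\psi>\varphi\}$ plus the strong maximum principle and the Hopf lemma. Finiteness of $\underline{\lambda}^\beta,\overline{\lambda}^\beta$, the optimality relation, uniqueness of $\underline{v}$, and the whole ``Lastly'' characterization are all read off from that one lemma (equality of the two eigenfunctions forces equality a.e.\ in the differential inequalities, which is exactly the optimality relation --- no ``strict decrease on a positive-measure set'' statement is ever needed). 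You instead invoke a sub/supersolution characterization of $\lambda_1^\beta$ proved by pairing against a positive adjoint eigenfunction, and this is the one ingredient you do not justify: for a drift that is merely $L^\infty$, the formal adjoint is a divergence-form operator whose boundary condition $\partial_\nu\varphi^\ast+(\beta+v\cdot\nu)\varphi^\ast=0$ involves the trace $v\cdot\nu$, which is not defined pointwise, so existence, positivity and the integration by parts in the pairing all require a weak formulation and genuine work. The irony is that you do not need it: the $t^\ast=\max_{\overline\Omega}(\phi/\underline{\varphi})$ argument you deploy for the final identification $\phi=\underline{\varphi}$ (linearizing $|p|$ to get a bounded drift, then SMP in the interior or Hopf against the common Robin condition on the boundary) is precisely the paper's Lemma~\ref{lem:compar}, and applying it with $\varphi^\beta_{\Omega,v}$ as one of the two functions yields every eigenvalue comparison you obtain from the adjoint. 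Your remaining deviations are harmless and in places cleaner: you pass to the weak-$\ast$ limit of the minimizing drifts $v_n$ themselves and identify the limit equation directly (the paper reconstructs the optimal drift from the limit eigenfunction and re-applies the comparison lemma); your proof that $\{\nabla\underline{\varphi}=0\}$ is negligible is the paper's; and in part~3 you derive radial monotonicity from the one-dimensional profile equation (every critical point is a strict local maximum), where the paper uses the weak maximum principle on sub-balls together with the Hopf lemma --- both are correct.
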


We point out that similar properties had been derived in~\cite{HNRpreprint,HNR} for the extremal quantities $\underline{\lambda}(\Omega,\tau)$ and $\overline{\lambda}(\Omega,\tau)$ defined like~$\underline{\lambda}^\beta(\Omega,\tau)$ and~$\overline{\lambda}^\beta(\Omega,\tau)$ in~\eqref{underlambda}-\eqref{overlambda} with the Dirichlet eigenvalues $\lambda_1(\Omega,v)$ instead of the Robin ones~$\lambda_1^\beta(\Omega,v)$. The asymptotic behavior as $\tau\to+\infty$ of the eigenfunctions associated with~$\underline{\lambda}(\Omega,\tau)$ was analyzed in~\cite{HRR}.\par

The paper is organized as follows. In Section \ref{sec:compar}, we provide comparisons results between Robin, Dirichlet and Neumann eigenvalues in a fixed domain and for a given drift, and prove convergence of the Robin eigenvalues when $\beta\rightarrow+\infty$ (resp. when $\beta\rightarrow 0$) to the corresponding Dirichlet (resp. Neumann) eigenvalues. Section \ref{sec:fixed} is devoted to the proof of Theorem \ref{th:fixedomega}. Finally, we establish Theorem \ref{th:main} is Section \ref{sec:main}.


\SE{Comparisons and convergence results between Robin, Dirichlet and Neumann principal eigenvalues} \label{sec:compar}

This section is concerned with some comparisons and convergence results for Robin and Dirichlet principal eigenvalues in a given domain $\Omega$. The results will be used in the proofs of the main Theorems~\ref{th:main} and~\ref{th:fixedomega}.
 
We first start with an auxiliary comparison lemma between sub- and super-solutions.

\begin{lemma} \label{lem:compar}
Let $\mu\in \R$, $\beta\ge0$, and $v\in L^\infty(\Omega,\R^d)$. Let $\psi,\varphi\in W^{2,p}(\Omega)$ for all $1\le p<\infty$, such that $\psi\ge0$ and $\varphi\ge 0$ in $\Omega$, $\left\Vert \psi\right\Vert_\infty=\left\Vert \varphi\right\Vert_\infty=1$, and
$$
\left\{
\begin{array}{ll}
-\Delta\psi+v\cdot\nabla\psi\ge \mu\psi & \mbox{ a.e. in }\Omega,\vspace{3pt}\\
-\Delta\varphi+v\cdot\nabla\varphi\le \mu\varphi& \mbox{ a.e. in }\Omega.
\end{array}
\right.
$$
Assume also that 
$$
\frac{\partial\psi}{\partial\nu}+\beta\psi\ge0\ge\frac{\partial\varphi}{\partial\nu}+\beta\varphi\mbox{ on }\partial\Omega.
$$
Then $\psi=\varphi$ in $\overline{\Omega}$.
\end{lemma}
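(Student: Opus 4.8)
The plan is to compare $\psi$ and $\varphi$ through their quotient $\varphi/\psi$, exploiting that $\psi$ is a positive supersolution and $\varphi$ a subsolution for the same spectral parameter $\mu$. Since $\psi,\varphi\in W^{2,p}(\Omega)$ for every $p<\infty$, they lie in $C^{1,\alpha}(\overline\Omega)$, so their boundary traces and normal derivatives are well defined and the Hopf lemma applies (the $C^2$ regularity of $\Omega$ providing the interior ball condition). To accommodate the arbitrary sign of the zeroth-order coefficient $-\mu$, I would work with the shifted operator $\tilde L:=-\Delta+v\cdot\nabla+\tilde c$, where $\tilde c:=\max(-\mu,0)\ge0$: any nonnegative $u$ with $-\Delta u+v\cdot\nabla u\ge\mu u$ then satisfies $\tilde Lu=(-\Delta u+v\cdot\nabla u-\mu u)+(\tilde c+\mu)u\ge0$, because $\tilde c+\mu\ge0$ and $u\ge0$. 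Thus such a $u$ is a supersolution for an operator with nonnegative zeroth-order term, for which the strong maximum principle and the Hopf lemma (in their $W^{2,p}$ versions) are available.

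First I would show $\min_{\overline\Omega}\psi>0$. As $\psi\ge0$ is a supersolution of $\tilde L$ and $\|\psi\|_\infty=1$, we have $\psi\not\equiv0$, so the strong maximum principle forces $\psi>0$ in the connected open set $\Omega$. If $\psi$ vanished at some boundary point $x_0$, the Hopf lemma would give $\partial_\nu\psi(x_0)<0$, contradicting the hypothesis $\partial_\nu\psi+\beta\psi\ge0$ at $x_0$. Hence $\psi>0$ on $\overline\Omega$, the quotient $\varphi/\psi$ is continuous on the compact set $\overline\Omega$, and $\gamma:=\max_{\overline\Omega}(\varphi/\psi)$ is a well-defined positive real number, attained at some point $x_0\in\overline\Omega$.

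Next I would set $w:=\gamma\psi-\varphi$, so that $w\ge0$ on $\overline\Omega$ and $w(x_0)=0$. By linearity of $-\Delta+v\cdot\nabla$ and the two differential inequalities (using $\gamma>0$), $-\Delta w+v\cdot\nabla w\ge\gamma\mu\psi-\mu\varphi=\mu w$ a.e. in $\Omega$, so $w$ is a nonnegative supersolution of $\tilde L$. I would then split according to the location of $x_0$. If $x_0\in\Omega$, the strong maximum principle applied to $\tilde L$ gives $w\equiv0$ in $\Omega$, hence on $\overline\Omega$. If $x_0\in\partial\Omega$, then either $w\equiv0$ (and we are done) or $w>0$ in $\Omega$, in which case the Hopf lemma yields $\partial_\nu w(x_0)<0$; however, using $\gamma\ge0$ together with the boundary hypotheses, $\partial_\nu w+\beta w=\gamma(\partial_\nu\psi+\beta\psi)-(\partial_\nu\varphi+\beta\varphi)\ge0$ on $\partial\Omega$, so at $x_0$ (where $w=0$) one gets $\partial_\nu w(x_0)\ge0$, a contradiction. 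In all cases $w\equiv0$, that is $\varphi=\gamma\psi$ on $\overline\Omega$.

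Finally, taking sup-norms in $\varphi=\gamma\psi$ and using $\|\varphi\|_\infty=\|\psi\|_\infty=1$ gives $\gamma=1$, whence $\varphi=\psi$ on $\overline\Omega$. The points I expect to require the most care are the strict positivity of $\psi$ up to the boundary (which is what makes the quotient and $\gamma$ meaningful) and the correct bookkeeping of the sign of $\mu$ via the shift $\tilde c$, so that the strong maximum principle and the Hopf lemma can be invoked for supersolutions attaining the value $0$; the boundary inequality then enters precisely to exclude the boundary-minimum case through Hopf.
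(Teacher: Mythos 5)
Your proof is correct and follows essentially the same route as the paper's: both arguments compare $\varphi$ with $\gamma\psi$ at the critical value of $\gamma$ and conclude via the strong maximum principle, the Hopf lemma, and the normalization $\|\psi\|_\infty=\|\varphi\|_\infty=1$. The only cosmetic difference is that you locate the touching point directly as the argmax of $\varphi/\psi$ (handling the boundary case by Hopf), whereas the paper characterizes $\gamma^{\ast}$ as an infimum and rules out $w>0$ in $\Omega$ by a small perturbation of $\gamma^{\ast}$; your bookkeeping of the sign of $\mu$ via the shift $\tilde c=\max(-\mu,0)$ is a correct way to make the maximum principle applicable.
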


\begin{proof}
The argument is reminiscent of the proof of \cite[Lemma 2.1]{HNRpreprint}. Remember first that $\psi$ and $\varphi$ belong to $C^{1,\alpha}(\overline{\Omega})$ for all $\alpha\in(0,1)$. Furthermore, $\psi>0$ in $\Omega$ from the interior strong maximum principle (otherwise, $\psi$ would be identically $0$ in $\Omega$, contradicting $\|\psi\|_\infty=1$). Observe now that $\psi>0$ on $\partial\Omega$. Indeed, if there exists $x_0\in \partial\Omega$ such that $\psi(x_0)=0$, then the Hopf lemma shows that $\frac{\partial\psi}{\partial\nu}(x_0)<0$, which is impossible by the boundary condition satisfied by $\psi$. Thus, being continuous in $\overline{\Omega}$, $\psi$ is bounded below by a positive constant, so that there exists $\gamma>0$ such that $\gamma\psi>\varphi$ in $\Omega$. Define
$$
\gamma^{\ast}:=\inf\left\{\gamma>0:\gamma\psi>\varphi\mbox{ in }\Omega\right\}
$$
and $w:=\gamma^{\ast}\psi-\varphi$. Note that, since $\varphi\ge0$ in $\Omega$ and $\|\varphi\|_\infty=1$, $\gamma^{\ast}>0$. The function $w$ is nonnegative in $\overline{\Omega}$, 
$$
\frac{\partial w}{\partial\nu}+\beta w\ge0\mbox{ on }\partial\Omega
$$
and
$$
-\Delta w+v\cdot\nabla w-\mu w\ge 0\mbox{ a.e. in }\Omega.
$$
If $w>0$ in $\Omega$, then, as before, $w$ is bounded below by a positive constant in $\Omega$, so there exists $\delta>0$ such that $w>\delta\varphi$ in $\Omega$, which entails in turn
$$
\frac{\gamma^{\ast}}{1+\delta}\,\psi>\varphi\mbox{ in }\Omega,
$$
contradicting the definition of $\gamma^{\ast}$, since $\gamma^{\ast}>0$. Therefore, there exists $x_0\in \Omega$ such that $w(x_0)=0$, and since $w\ge 0$ in $\Omega$, the strong maximum principle indicates that $w(x)=0$ everywhere in $\Omega$ and then in $\overline{\Omega}$ by continuity, meaning that $\gamma^{\ast}\psi=\varphi$ in $\overline{\Omega}$. The condition $\left\Vert \varphi\right\Vert_\infty=\left\Vert \psi\right\Vert_\infty=1$ finally yields $\varphi=\psi$ in $\overline{\Omega}$. 
\end{proof}
 
Let now $\Omega\subset \R^d$ be a bounded $C^2$ domain and $v\in L^\infty(\Omega,\R^d)$. Denote by $\lambda_1^D(\Omega,v)$ the principal eigenvalue of $-\Delta+v\cdot\nabla$ in $\Omega$ under Dirichlet boundary condition and by $\varphi^D_{\Omega,v}\in\bigcap_{1\le p<\infty}W^{2,p}(\Omega)$ the corresponding principal eigenfunction (which is positive in $\Omega$) normalized by
$$\left\Vert \varphi^D_{\Omega,v}\right\Vert_\infty=1.$$
We will show that the map $\beta\mapsto \lambda_1^\beta(\Omega,v)$ is increasing in $(0,\infty)$ and converges to $\lambda_1^D(\Omega,v)$ at infinity, and to $0$ (that is, the principal eigenvalue of $-\Delta+v\cdot\nabla$ in $\Omega$ under Neumann boundary condition) as $\beta\to0$:

\begin{lemma} \label{lem:robindir}
Let $\Omega\subset \R^d$ be a bounded $C^2$ domain and $v\in L^\infty(\Omega,\R^d)$. Then the map $\beta\mapsto \lambda_1^\beta(\Omega,v)$ is increasing in $(0,+\infty)$. Furthermore,
\begin{equation} \label{eq:limlambda1}
\lim_{\beta\rightarrow+\infty} \lambda_1^\beta(\Omega,v)=\lambda_1^D(\Omega,v).
\end{equation}
and
$$\lim_{\beta\rightarrow0} \lambda_1^\beta(\Omega,v)=0.$$
\end{lemma}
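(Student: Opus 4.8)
The plan is to establish the three assertions separately, using the comparison Lemma~\ref{lem:compar} for the monotonicity and for the a priori bounds, and an energy estimate combined with a compactness argument for the limit as $\beta\to+\infty$.

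\emph{Monotonicity and the bound $\lambda_1^\beta<\lambda_1^D$.} I would fix $0<\beta_1<\beta_2$ and argue by contradiction, assuming $\lambda_1^{\beta_1}(\Omega,v)\ge\lambda_1^{\beta_2}(\Omega,v)$. Setting $\psi:=\varphi^{\beta_1}_{\Omega,v}$ and $\varphi:=\varphi^{\beta_2}_{\Omega,v}$ (both normalized by $\|\cdot\|_\infty=1$ and positive on $\overline\Omega$), one has $-\Delta\psi+v\cdot\nabla\psi=\lambda_1^{\beta_1}\psi\ge\lambda_1^{\beta_2}\psi$ and $-\Delta\varphi+v\cdot\nabla\varphi=\lambda_1^{\beta_2}\varphi$, while on $\partial\Omega$ one computes $\partial_\nu\psi+\beta_2\psi=(\beta_2-\beta_1)\psi>0=\partial_\nu\varphi+\beta_2\varphi$. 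Lemma~\ref{lem:compar}, with $\mu=\lambda_1^{\beta_2}$ and $\beta=\beta_2$, then forces $\psi=\varphi$ on $\overline\Omega$, whence $(\beta_2-\beta_1)\psi\equiv0$ on $\partial\Omega$, contradicting $\psi>0$ there; thus $\beta\mapsto\lambda_1^\beta(\Omega,v)$ is increasing. The very same scheme, now comparing $\varphi^\beta_{\Omega,v}$ with the Dirichlet eigenfunction $\varphi^D_{\Omega,v}$ (for which $\partial_\nu\varphi^D_{\Omega,v}+\beta\varphi^D_{\Omega,v}=\partial_\nu\varphi^D_{\Omega,v}<0$ on $\partial\Omega$ by the Hopf lemma, whereas $\varphi^D_{\Omega,v}=0$ there), yields $\lambda_1^\beta(\Omega,v)<\lambda_1^D(\Omega,v)$ for every $\beta>0$. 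In particular the increasing map admits a finite limit $\ell\le\lambda_1^D(\Omega,v)$ as $\beta\to+\infty$.

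\emph{Limit $\beta\to+\infty$.} Here I would switch to the normalization $\|\varphi^\beta_{\Omega,v}\|_{L^2(\Omega)}=1$ and exploit the energy identity obtained by multiplying the eigenvalue equation by $\varphi^\beta_{\Omega,v}$ and integrating by parts:
\[
\int_\Omega|\nabla\varphi^\beta_{\Omega,v}|^2+\beta\int_{\partial\Omega}(\varphi^\beta_{\Omega,v})^2+\int_\Omega(v\cdot\nabla\varphi^\beta_{\Omega,v})\,\varphi^\beta_{\Omega,v}=\lambda_1^\beta(\Omega,v).
\]
The drift term is bounded by $\|v\|_\infty\|\nabla\varphi^\beta_{\Omega,v}\|_{L^2}$ and absorbed through Young's inequality, so that, using $\lambda_1^\beta(\Omega,v)<\lambda_1^D(\Omega,v)$, the gradient $\|\nabla\varphi^\beta_{\Omega,v}\|_{L^2}$ stays bounded while $\int_{\partial\Omega}(\varphi^\beta_{\Omega,v})^2\le C/\beta\to0$. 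Along a sequence $\beta_n\to+\infty$ I would then extract, by Rellich's theorem and compactness of the trace $H^1(\Omega)\to L^2(\partial\Omega)$, a limit $\varphi_\infty$ with $\varphi^{\beta_n}_{\Omega,v}\rightharpoonup\varphi_\infty$ in $H^1(\Omega)$, strongly in $L^2(\Omega)$ and in $L^2(\partial\Omega)$. The boundary bound makes the trace of $\varphi_\infty$ vanish, so $\varphi_\infty\in H^1_0(\Omega)$; the $L^2$-normalization gives $\varphi_\infty\not\equiv0$ with $\varphi_\infty\ge0$; and passing to the limit in the weak formulation against $C^\infty_c(\Omega)$ test functions shows $-\Delta\varphi_\infty+v\cdot\nabla\varphi_\infty=\ell\,\varphi_\infty$. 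By elliptic regularity and the strong maximum principle $\varphi_\infty>0$ in $\Omega$, so $\varphi_\infty$ is a positive Dirichlet eigenfunction; since the principal Dirichlet eigenvalue is the only one admitting a positive eigenfunction, $\ell=\lambda_1^D(\Omega,v)$, which is \eqref{eq:limlambda1}.

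\emph{Limit $\beta\to0$.} For the behaviour as $\beta\to0^+$ I would produce an explicit positive subsolution. Fixing $w\in C^2(\overline\Omega)$ with $w=0$ and $\partial_\nu w=-1$ on $\partial\Omega$, and setting $C:=\|-\Delta w+v\cdot\nabla w\|_{L^\infty(\Omega)}$, the function $\phi_\beta:=1+\beta w$ is positive on $\overline\Omega$ for $\beta$ small, satisfies $\partial_\nu\phi_\beta+\beta\phi_\beta=0$ on $\partial\Omega$, and $-\Delta\phi_\beta+v\cdot\nabla\phi_\beta=\beta(-\Delta w+v\cdot\nabla w)\le C\beta\le2C\beta\,\phi_\beta$ since $\phi_\beta\ge1/2$. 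Were $\lambda_1^\beta(\Omega,v)>2C\beta$, then $\varphi^\beta_{\Omega,v}$ and the normalized $\phi_\beta$ would meet the hypotheses of Lemma~\ref{lem:compar} with $\mu=2C\beta$ and hence coincide, which is impossible because $\phi_\beta$ does not solve the eigenvalue equation; therefore $0<\lambda_1^\beta(\Omega,v)\le2C\beta$, and letting $\beta\to0$ gives $\lim_{\beta\to0}\lambda_1^\beta(\Omega,v)=0$. The main obstacle is the $\beta\to+\infty$ step: one must at once guarantee that the limit function is nontrivial and that it vanishes on $\partial\Omega$, and it is precisely the energy identity — the $L^2$-normalization securing non-degeneracy, the factor $\beta$ in front of the boundary integral securing the Dirichlet condition — that reconciles these two requirements, the non-self-adjoint drift being harmless thanks to Young's inequality and weak $H^1$-convergence.
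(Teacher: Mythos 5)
Your proposal is correct. The monotonicity of $\beta\mapsto\lambda_1^\beta(\Omega,v)$, the strict bound $\lambda_1^\beta(\Omega,v)<\lambda_1^D(\Omega,v)$, and the limit as $\beta\to+\infty$ are handled essentially as in the paper: the only cosmetic difference in the monotonicity step is that you apply Lemma~\ref{lem:compar} with the parameter $\beta_2$ (deriving the contradiction from $(\beta_2-\beta_1)\psi\equiv0$ on $\partial\Omega$), whereas the paper applies it with $\beta_1$ and contradicts the strict inequality $\frac{\partial\varphi_2}{\partial\nu}+\beta_1\varphi_2<0$; both are valid. The $\beta\to+\infty$ compactness argument (normalization in $L^2$, the energy identity with the factor $\beta$ forcing the trace to vanish, weak $H^1$ limit, identification via positivity of the limit eigenfunction) is the paper's argument. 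Where you genuinely diverge is the limit $\beta\to0$: the paper repeats the compactness scheme, passes to the limit in the weak formulation against arbitrary $\psi\in H^1(\Omega)$, and identifies the limit as the principal \emph{Neumann} eigenvalue, which is $0$; you instead build the explicit barrier $\phi_\beta=1+\beta w$ with $w=0$, $\partial_\nu w=-1$ on $\partial\Omega$, and conclude $0<\lambda_1^\beta(\Omega,v)\le 2C\beta$ from Lemma~\ref{lem:compar}. Your route is more elementary (no extraction of subsequences, no identification of a limit problem) and buys a quantitative rate $\lambda_1^\beta(\Omega,v)=O(\beta)$, which the paper's argument does not give; the paper's route, on the other hand, identifies the limiting Neumann eigenpair, which is conceptually aligned with the $\beta\to+\infty$ case. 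Two minor points you should make explicit: the existence of such a $w\in C^2(\overline\Omega)$ follows from the $C^2$ regularity of $\Omega$ (take $w=\eta(\dist(\cdot,\partial\Omega))$ with a suitable cut-off $\eta$ satisfying $\eta(0)=0$, $\eta'(0)=1$), and one should replace $C$ by $\max(C,1)$ to guarantee $2C\beta>0$ so that the strict inequality $\lambda_1^\beta(\Omega,v)>2C\beta$ genuinely contradicts the exhibited subsolution.
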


\begin{proof}
Let $0<\beta_1<\beta_2$ and assume by way of contradiction that $\lambda_1^{\beta_2}(\Omega,v)\le \lambda_1^{\beta_1}(\Omega,v)$. Set $\varphi_1:=\varphi^{\beta_1}_{\Omega,v}$ and $\varphi_2:=\varphi^{\beta_2}_{\Omega,v}$. Both functions $\varphi_1$ and $\varphi_2$ are positive in $\overline{\Omega}$ and they satisfy
$$
\left\{
\begin{array}{ll}
-\Delta\varphi_1+v\cdot\nabla\varphi_1=\lambda_1^{\beta_1}(\Omega,v)\varphi_1 & \mbox{ a.e. in }\Omega,\vspace{3pt}\\
-\Delta\varphi_2+v\cdot\nabla\varphi_2=\lambda_1^{\beta_2}(\Omega,v)\varphi_2\le\lambda_1^{\beta_1}(\Omega,v)\varphi_2 & \mbox{ a.e. in }\Omega,
\end{array}
\right.
$$
together with
\be\label{strict}
\frac{\partial\varphi_1}{\partial\nu}+\beta_1\varphi_1=0=\frac{\partial\varphi_2}{\partial\nu}+\beta_2\varphi_2>\frac{\partial\varphi_2}{\partial\nu}+\beta_1\varphi_2\mbox{ on }\partial\Omega.
\ee
Lemma~\ref{lem:compar} applied with $(\mu,\beta,\psi,\varphi):=(\lambda_1^{\beta_1}(\Omega,v),\beta_1,\varphi_1,\varphi_2)$ then entails $\varphi_1=\varphi_2$ in~$\overline{\Omega}$, contradicting the strict inequality in~\eqref{strict}. Finally,
$$\lambda_1^{\beta_1}(\Omega,v)<\lambda_1^{\beta_2}(\Omega,v),$$
and the map $\beta\mapsto\lambda_1^\beta(\Omega,v)$ is increasing in $(0,+\infty)$.

Let now $\beta>0$ and assume by way of contradiction that $\lambda_1^{\beta}(\Omega,v)\ge \lambda_1^D(\Omega,v)$. Both functions $\varphi^\beta_{\Omega,v}$ and $\varphi^D_{\Omega,v}$ are positive in $\Omega$ and they satisfy
$$
\left\{
\begin{array}{ll}
-\Delta\varphi^\beta_{\Omega,v}+v\cdot\nabla\varphi^\beta_{\Omega,v}=\lambda_1^{\beta}(\Omega,v)\varphi^\beta_{\Omega,v} & \mbox{ a.e. in }\Omega,\vspace{3pt}\\
-\Delta\varphi^D_{\Omega,v}+v\cdot\nabla\varphi^D_{\Omega,v}=\lambda_1^D(\Omega,v)\varphi^D_{\Omega,v}\le\lambda_1^{\beta}(\Omega,v)\varphi^D_{\Omega,v} & \mbox{ a.e. in }\Omega.
\end{array}
\right.
$$
Furthermore, the Hopf lemma implies that $\frac{\partial\varphi^D_{\Omega,v}}{\partial\nu}<0$ on $\partial\Omega$, whence
\be\label{strict2}
\frac{\partial\varphi^\beta_{\Omega,v}}{\partial\nu}+\beta\varphi^\beta_{\Omega,v}=0>\frac{\partial\varphi^D_{\Omega,v}}{\partial\nu}+\beta\varphi^D_{\Omega,v}\mbox{ on }\partial\Omega.
\ee
Lemma~\ref{lem:compar} applied with $(\mu,\beta,\psi,\varphi):=(\lambda_1^{\beta}(\Omega,v),\beta,\varphi^\beta_{\Omega,v},\varphi^D_{\Omega,v})$ then entails $\varphi^\beta_{\Omega,v}=\varphi^D_{\Omega,v}$ in~$\overline{\Omega}$, contradicting the strict inequality in~\eqref{strict2} (or the fact that $\varphi^\beta_{\Omega,v}>0=\varphi^D_{\Omega,v}$ on $\partial\Omega$). Finally,
$$\lambda_1^{\beta}(\Omega,v)<\lambda_1^D(\Omega,v)$$
for all $\beta>0$.

Let us now turn to the proof of~\eqref{eq:limlambda1}. Pick up any increasing sequence $(\beta_k)_{k\in \N}$ of positive real numbers with $\lim_{k\rightarrow+\infty} \beta_k=+\infty$ and set $\lambda_k:=\lambda_1^{\beta_k}(\Omega,v)$ for all $k\in\N$. The sequence $(\lambda_k)_{k\in \N}$ is increasing and bounded above by $\lambda_1^D(\Omega,v)$ and therefore converges to some $\mu\le \lambda_1^D(\Omega,v)$. For all $k$, if $\varphi_k$ is defined as $\varphi_k:=\theta_k\varphi^{\beta_k}_{\Omega,v}$ with $\theta_k>0$ such that $\|\varphi_k\|_{L^2(\Omega)}=1$, then
\be\label{varphik1}
\left\{
\begin{array}{ll}
-\Delta \varphi_k+v\cdot\nabla\varphi_k=\lambda_k\varphi_k& \mbox{ a.e. in }\Omega,\\
\displaystyle \frac{\partial\varphi_k}{\partial\nu}+\beta_k\varphi_k=0 & \mbox{ on }\partial\Omega.
\end{array}
\right.
\ee
We claim that the sequence $(\varphi_k)_{k\in \N}$ is bounded in $H^1(\Omega)$. Indeed, for all $k\in \N$,
\begin{eqnarray*}
\lambda_k\int_\Omega \varphi_k^2 & = & -\int_\Omega \varphi_k\Delta\varphi_k+\int_\Omega (v\cdot\nabla\varphi_k)\varphi_k\\
& = & \int_\Omega \left\vert \nabla\varphi_k\right\vert^2-\int_{\partial\Omega} \varphi_k\frac{\partial \varphi_k}{\partial\nu}+\int_\Omega (v\cdot\nabla\varphi_k)\varphi_k\\
& = & \int_\Omega \left\vert \nabla\varphi_k\right\vert^2+\beta_k\int_{\partial\Omega} \varphi_k^2+\int_\Omega (v\cdot\nabla\varphi_k)\varphi_k.
\end{eqnarray*}
From this, we derive, for all $\varepsilon>0$,
\be\label{eq:normphik}\baa{rcl}
\displaystyle\int_\Omega \left\vert \nabla\varphi_k\right\vert^2+\beta_k\int_{\partial\Omega}\varphi_k^2 & \le & \displaystyle\lambda_k\int_\Omega \varphi_k^2+\left\Vert v\right\Vert_\infty\int_\Omega \varphi_k\left\vert \nabla\varphi_k\right\vert\\
& \le & \displaystyle\left(\lambda_k+\frac 1{2\varepsilon}\left\Vert v\right\Vert_\infty\right) \int_\Omega \varphi_k^2+\frac{\varepsilon}2\left\Vert v\right\Vert_\infty \int_\Omega \left\vert \nabla\varphi_k\right\vert^2.\eaa
\ee
Provided $\varepsilon \left\Vert v\right\Vert_\infty<2$, recalling that the sequences $(\lambda_k)_{k\in \N}$ and $(\|\varphi_k\|_{L^2(\Omega)})_{k\in\N}$ are bounded, one obtains that the sequence $(\varphi_k)_{k\in \N}$ is bounded in $H^1(\Omega)$, that is, there is $M\in\R_+$ such that $\|\varphi_k\|_{H^1(\Omega)}=\sqrt{\|\varphi_k\|_{L^2(\Omega)}^2+\|\,|\nabla\varphi_k|\,\|_{L^2(\Omega)}^2}\le M$ for all $k\in\N$. Therefore, there exists $\varphi\in H^1(\Omega)$ such that, up to a subsequence, 
\be\label{varphik2}
\varphi_k\rightharpoonup\varphi\mbox{ weakly in } H^1(\Omega),\ \ \varphi_k\rightarrow \varphi\mbox{ strongly in }L^2(\Omega)\ \hbox{ and }\ \varphi_k\rightarrow \varphi\mbox{ a.e. in }\Omega,
\ee
as $k\to+\infty$, whence
\be\label{varphi3}
\|\varphi\|_{L^2(\Omega)}=1\ \hbox{ and }\ \varphi\ge0\hbox{ a.e. in $\Omega$}.
\ee
Moreover, since $\lim_{k\rightarrow+\infty} \beta_k=+\infty$,~\eqref{eq:normphik} shows that 
\be\label{trace0}
\lim_{k\rightarrow+\infty} \mbox{tr}(\varphi_k)=0\mbox{ strongly in }L^2(\partial\Omega),
\ee
where $\mbox{tr }:H^1(\Omega)\rightarrow L^2(\partial\Omega)$ denotes the trace operator. Since this trace operator is continuous from $H^1(\Omega)$ to $L^2(\partial\Omega)$ with the topologies induced by the norms, and since $\varphi_k\rightharpoonup\varphi$ weakly in $H^1(\Omega)$ as $k\to+\infty$, it follows that $\mbox{tr}(\varphi_k)\rightharpoonup\mbox{tr}(\varphi)$ weakly in $L^2(\partial\Omega)$ as $k\to+\infty$. Finally, since $\mbox{tr}(\varphi_k)\to0$ strongly in $L^2(\partial\Omega)$ as $k\to+\infty$ by~\eqref{trace0}, one gets that~$\mbox{tr}(\varphi)=0$, meaning that $\varphi\in H^1_0(\Omega)$.

Consider now $\psi\in C^{\infty}_c(\Omega)$. One has
\begin{eqnarray*}
\int_\Omega \nabla\varphi\cdot\nabla\psi+\int_\Omega (v\cdot\nabla\varphi)\psi & = & \lim_{k\rightarrow+\infty} \int_\Omega \nabla\varphi_k\cdot\nabla\psi+\int_\Omega (v\cdot\nabla\varphi_k)\psi\\
& = & \lim_{k\rightarrow+\infty} \lambda_k\int_\Omega \varphi_k\psi\\
& = &  \mu\int_\Omega \varphi\psi,
\end{eqnarray*}
which means that $\varphi$ is an $H^1_0(\Omega)$ weak solution of
$$
\left\{
\begin{array}{ll}
-\Delta \varphi+v\cdot \nabla \varphi=\mu\varphi & \mbox{ in }\Omega,\vspace{3pt}\\
\mbox{tr}(\varphi)=0 & \mbox{ on }\partial\Omega.
\end{array}
\right.
$$
Elliptic $H^2$ and $W^{2,p}$ estimates show that $\varphi\in W^{2,p}(\Omega)$ for all $1\le p<\infty$, and, since $\varphi\ge0$ in $\Omega$ and $\|\varphi\|_{L^2(\Omega)}=1$, the strong maximum principle entails that $\varphi>0$ in~$\Omega$. Thus, by uniqueness of the principal eigenvalue of $-\Delta+v\cdot\nabla$ under Dirichlet boundary condition, one gets that
$$\mu=\lambda_1^D(\Omega,v),$$
which ends the proof.

Lastly, let us investigate the limit of $\lambda^\beta_1(\Omega,v)$ as $\beta\to0$. Pick up any decreasing sequence $(\beta_k)_{k\in \N}$ of positive real numbers with $\lim_{k\rightarrow+\infty} \beta_k=0$ and set $\lambda_k:=\lambda_1^{\beta_k}(\Omega,v)$ for all $k\in\N$. The sequence $(\lambda_k)_{k\in \N}$ is decreasing and bounded below by $0$, and therefore converges to some $\lambda\ge0$. For all $k$, if $\varphi_k$ is defined as $\varphi_k:=\theta_k\varphi^{\beta_k}_{\Omega,v}$ with $\theta_k>0$ such that $\|\varphi_k\|_{L^2(\Omega)}=1$, then as above~\eqref{varphik1}-\eqref{eq:normphik} still hold and there exists $\varphi\in H^1(\Omega)$ satisfying~\eqref{varphik2}-\eqref{varphi3}, up to a subsequence. Pick now any $\psi\in H^1(\Omega)$. For all $k\in\N$, one has
$$\lambda_k\int_\Omega\varphi_k\psi=\int_\Omega\nabla\varphi_k\cdot\nabla\psi+\beta_k\int_{\partial\Omega}\varphi_k\psi+\int_\Omega(v\cdot\nabla\varphi_k)\psi.$$
But $\beta_k\to0$ as $k\to+\infty$ and the sequence $(\mbox{tr}(\varphi_k))_{k\in\N}$ is bounded in $L^2(\partial\Omega)$ (since so is $(\varphi_k)_{k\in\N}$ in $H^1(\Omega)$). Hence, by~\eqref{varphik2}, the passage to the limit as $k\to+\infty$ in the above formula leads to
$$\lambda\int_\Omega\varphi\psi=\int_\Omega\nabla\varphi\cdot\nabla\psi+\int_\Omega(v\cdot\nabla\varphi)\psi.$$
In other words, $\varphi$ is an $H^1(\Omega)$ weak solution of
$$
\left\{
\begin{array}{ll}
-\Delta \varphi+v\cdot \nabla \varphi=\lambda\varphi & \mbox{ in }\Omega,\\
\displaystyle\frac{\partial\varphi}{\partial\nu}=0 & \mbox{ on }\partial\Omega.
\end{array}
\right.
$$
Elliptic $H^2$ and $W^{2,p}$ estimates show that $\varphi\in W^{2,p}(\Omega)$ for all $1\le p<\infty$, and by~\eqref{varphi3} the strong maximum principle and Hopf lemma entail that $\varphi>0$ in $\overline{\Omega}$. Thus, by uniqueness of the principal eigenvalue of $-\Delta+v\cdot\nabla$ under Neumann boundary condition, one gets that $\lambda=0$. The proof of Lemma~\ref{lem:robindir} is thereby complete.
\end{proof}


\SE{Optimization of the principal eigenvalue in a fixed domain} \label{sec:fixed}

This section is devoted to the proof of Theorem \ref{th:fixedomega}.

\begin{proof} [Proof of Theorem $\ref{th:fixedomega}$]
{\it Part 1.} We first focus on the infimum problem and begin with the existence part. Let $(v_k)_{k\in \N}$ be a sequence of vector fields in $L^{\infty}(\Omega,\R^d)$ such that $\left\Vert v_k\right\Vert_\infty\le \tau$ for all $k$ and 
$$
\lim_{k\rightarrow +\infty} \lambda_1^\beta(\Omega,v_k)=\underline{\lambda}^\beta(\Omega,\tau).
$$
For all $k\in \N$, define $\lambda_k:=\lambda_1^\beta(\Omega,v_k)$ and let $\varphi_k:=\varphi^\beta_{\Omega,v_k}$ be the corresponding eigenfunction, normalized with $\max_{\overline{\Omega}}\varphi_k=1$. Since $(v_k)_{k\in\N}$ is bounded in $L^\infty(\Omega,\R^d)$ and the sequence $(\lambda_k)_{k\in \N}$ is bounded, $W^{2,p}$ elliptic estimates (\cite[Theorem A.29]{YD}) show that the sequence $(\varphi_k)_{k\in \N}$ is bounded in $W^{2,p}(\Omega)$ for all $1\le p<\infty$. Up to a subsequence, there exist $\varphi\in \bigcap_{1\le p<\infty} W^{2,p}(\Omega)$ and $f\in L^\infty(\Omega)$ such that, as $k\to+\infty$,
$$
\varphi_k\rightharpoonup\varphi\mbox{ weakly in } W^{2,p}(\Omega)
$$
for all $1\le p<\infty$,
$$
\varphi_k\rightarrow \varphi\mbox{ strongly in } C^{1,\alpha}(\overline{\Omega})
$$
for all $\alpha\in(0,1)$, and
$$
v_k\cdot\nabla\varphi_k\overset{\ast}{\rightharpoonup}f\mbox{ weak-}\!\ast\mbox{ in }L^\infty(\Omega).
$$
As a consequence,
$$
-\Delta\varphi+f=\underline{\lambda}^\beta(\Omega,\tau)\varphi\ \hbox{ a.e. in $\Omega$}
$$
and
$$
-\Delta\varphi-\tau\left\vert \nabla\varphi\right\vert\le  \underline{\lambda}^\beta(\Omega,\tau)\varphi\ \hbox{ a.e. in $\Omega$}.
$$
Moreover, $\varphi\ge 0$ in $\Omega$, $\left\Vert \varphi\right\Vert_\infty=1$ and
$$\frac{\partial \varphi}{\partial\nu}+\beta\varphi=0\hbox{ on $\partial\Omega$}.$$

Define now $v\in L^{\infty}(\Omega,\R^d)$ by
\be\label{defv}
v(x):=\left\{
\begin{array}{ll}
\displaystyle-\tau\frac{\nabla\varphi(x)}{\left\vert \nabla\varphi(x)\right\vert} & \mbox{ if }\nabla \varphi(x)\ne 0,\\
0 & \mbox{ if }\nabla\varphi(x)=0.
\end{array}
\right.
\ee
Notice that $\left\Vert v\right\Vert_\infty\le\tau$, which entails that $\underline{\lambda}^\beta(\Omega,\tau)\le \lambda_1^\beta(\Omega,v)$. On the one hand,
\be\label{ineqvarphi}
-\Delta\varphi+v\cdot \nabla\varphi=-\Delta\varphi-\tau\left\vert \nabla\varphi\right\vert\le \underline{\lambda}^\beta(\Omega,\tau)\varphi\le\lambda_1^\beta(\Omega,v)\varphi\ \hbox{ a.e. in $\Omega$}.
\ee
On the other hand, $\varphi^\beta_{\Omega,v}>0$ in $\overline{\Omega}$,
$$\displaystyle\frac{\partial \varphi^\beta_{\Omega,v}}{\partial\nu}+\beta\varphi^\beta_{\Omega,v}=0=\displaystyle\frac{\partial \varphi}{\partial\nu}+\beta\varphi\ \hbox{ on $\partial\Omega$}$$
and $\|\varphi^\beta_{\Omega,v}\|_\infty=1$. Lemma~\ref{lem:compar} applied with $(\mu,\beta,\psi,\varphi):=(\lambda_1^\beta(\Omega,v),\beta,\varphi^\beta_{\Omega,\tau},\varphi)$ yields
$$\varphi^\beta_{\Omega,v}=\varphi\ \hbox{ in $\overline{\Omega}$}.$$
As a consequence, all inequalities in~\eqref{ineqvarphi} are equalities and
$$\underline{\lambda}^\beta(\Omega,\tau)=\lambda_1^\beta(\Omega,v).$$
Furthermore, since $\varphi\in W^{2,p}(\Omega)$ for each $1\le p<\infty$, it follows that
$$|\nabla(\partial_{x_i}\varphi)|\times\mathds{1}_{\{\partial_{x_i}\varphi=0\}}=0\ \hbox{ a.e. in }\Omega$$
for each $1\le i\le d$, where $\partial_{x_i}\varphi:=\frac{\partial\varphi}{\partial x_i}$, whence
$$\Delta\varphi\times\mathds{1}_{\{\nabla\varphi=0\}}=0\ \hbox{ a.e. in }\Omega.$$
Since $-\Delta\varphi+v\cdot\nabla\varphi=\lambda^\beta_1(\Omega,v)\varphi>0$ a.e. in $\Omega$, one gets that the set $\{x\in\Omega:\nabla\varphi(x)=0\}$ is negligible. Therefore, in addition to $v\cdot\nabla\varphi=-\tau|\nabla\varphi|$ a.e. in $\Omega$,~\eqref{defv} also entails that $|v(x)|=\tau$ for almost every $x\in\Omega$. The vector field $\underline{v}:=v$ and the function
$$\underline{\varphi}:=\varphi=\varphi^\beta_{\Omega,v}$$
then fulfill the required conclusions of part~1 of Theorem~\ref{th:fixedomega}.

Let us now turn to the uniqueness result in part~1 of Theorem \ref{th:fixedomega}. Assume that $w\in L^\infty(\Omega,\R^d)$ is such that $\|w\|_\infty\le\tau$ and $\lambda_1^\beta(\Omega,w)=\underline{\lambda}^\beta(\Omega,\tau)$. One has
\be\label{ineqvw}\left\{\baa{l}
-\Delta\varphi^\beta_{\Omega,v}+w\cdot\nabla\varphi^\beta_{\Omega,v}\ge-\Delta\varphi^\beta_{\Omega,v}-\tau|\nabla\varphi^\beta_{\Omega,v}|=\underline{\lambda}^\beta(\Omega,\tau)\varphi^\beta_{\Omega,v},\vspace{3pt}\\
-\Delta\varphi^\beta_{\Omega,w}+w\cdot\nabla\varphi^\beta_{\Omega,w}=\underline{\lambda}^\beta(\Omega,\tau)\varphi^\beta_{\Omega,w},\eaa\right.
\ee
a.e. in $\Omega$, together with
$$\displaystyle\frac{\partial\varphi^\beta_{\Omega,v}}{\partial\nu}+\beta\varphi^\beta_{\Omega,v}=0=\displaystyle\frac{\partial\varphi^\beta_{\Omega,w}}{\partial\nu}+\beta\varphi^\beta_{\Omega,w}\ \hbox{ on $\partial\Omega$}.$$
Furthermore, both functions $\varphi^\beta_{\Omega,v}$ and $\varphi^\beta_{\Omega,w}$ are positive (in $\overline{\Omega}$), with $L^\infty$ norms equal to $1$. Lemma~\ref{lem:compar} applied with $(\mu,\beta,\psi,\varphi):=(\underline{\lambda}^\beta(\Omega,\tau),\beta,\varphi^\beta_{\Omega,v},\varphi^\beta_{\Omega,w})$, and the vector field $w$ instead of $v$, then entails
$$\varphi^\beta_{\Omega,v}=\varphi^\beta_{\Omega,w}\ \hbox{ in $\overline{\Omega}$}.$$
Consequently, the first line in~\eqref{ineqvw} then yields
$$w\cdot\nabla\varphi^\beta_{\Omega,v}=-\tau|\nabla\varphi^\beta_{\Omega,v}|\ \hbox{ a.e. in $\Omega$},$$
that is, $w\cdot\nabla\varphi=-\tau|\nabla\varphi|$ a.e. in $\Omega$. Since $\nabla\varphi\neq0$ a.e. in $\Omega$ and $\|w\|_\infty\le\tau$, one concludes that
$$w=-\tau\frac{\nabla\varphi}{|\nabla\varphi|}\ \hbox{ a.e. in $\Omega$},$$
that is, $w=v$ a.e. in $\Omega$.

Lastly, let $\lambda\in\R$ and $\phi\in\bigcap_{1\le p<\infty}W^{2,p}(\Omega)$ satisfy
$$
\left\{
\begin{array}{ll}
-\Delta \phi-\tau\,|\nabla\phi|=\lambda\phi\ \hbox{ and }\ \phi\ge0 & \mbox{in }\Omega,\\
\displaystyle\frac{\partial \phi}{\partial\nu}+\beta \phi=0 & \mbox{on }\partial\Omega,
\end{array}
\right.
$$
and $\max_{\overline{\Omega}}\phi=1$. Define $q\in L^{\infty}(\Omega,\R^d)$ by
$$q(x):=\left\{
\begin{array}{ll}
\displaystyle-\tau\frac{\nabla\phi(x)}{\left\vert \nabla\phi(x)\right\vert} & \mbox{ if }\nabla \phi(x)\ne 0,\\
0 & \mbox{ if }\nabla\phi(x)=0.
\end{array}
\right.$$
Notice that $\left\Vert q\right\Vert_\infty\le\tau$. Since $-\tau|\nabla\phi|=q\cdot\nabla\phi$ a.e. in $\Omega$, the nonnegativity of~$\phi$ and the uniqueness of the pair of principal eigenvalue and principal normalized eigenfunction imply that
$$\lambda=\lambda^\beta_1(\Omega,q)\ge\underline{\lambda}^\beta(\Omega,\tau),\ \hbox{ and }\ \phi=\varphi^\beta_{\Omega,q}\hbox{ in }\overline{\Omega}.$$
Both functions $\phi=\varphi^\beta_{\Omega,q}$ and $\varphi=\varphi^\beta_{\Omega,v}$ are positive in $\overline{\Omega}$ with $L^\infty$ norms equal to $1$, and they satisfy
\be\label{ineq3}\left\{\baa{l}
-\Delta\varphi^\beta_{\Omega,q}+v\cdot\nabla\varphi^\beta_{\Omega,q}\ge-\Delta\varphi^\beta_{\Omega,q}-\tau|\nabla\varphi^\beta_{\Omega,q}|=\lambda^\beta_1(\Omega,q)\varphi^\beta_{\Omega,q},\vspace{3pt}\\
-\Delta\varphi^\beta_{\Omega,v}+v\cdot\nabla\varphi^\beta_{\Omega,v}=\underline{\lambda}^\beta(\Omega,\tau)\varphi^\beta_{\Omega,v}\le\lambda^\beta_1(\Omega,q)\varphi^\beta_{\Omega,v},\eaa\right.
\ee
a.e. in $\Omega$, together with
$$\displaystyle\frac{\partial\varphi^\beta_{\Omega,q}}{\partial\nu}+\beta\varphi^\beta_{\Omega,q}=0=\displaystyle\frac{\partial\varphi^\beta_{\Omega,v}}{\partial\nu}+\beta\varphi^\beta_{\Omega,v}\ \hbox{ on $\partial\Omega$}.$$
Lemma~\ref{lem:compar} applied with $(\mu,\beta,\psi,\varphi)=(\lambda^\beta_1(\Omega,q),\beta,\varphi^\beta_{\Omega,q},\varphi^\beta_{\Omega,v})$ then entails
$$\varphi^\beta_{\Omega,q}=\varphi^\beta_{\Omega,v}\ \hbox{ in $\overline{\Omega}$},$$
that is, $\phi=\varphi=\underline{\varphi}$ in $\overline{\Omega}$. Furthermore, all inequalities in~\eqref{ineq3} are equalities and
$$\underline{\lambda}^\beta(\Omega,\tau)=\lambda^\beta_1(\Omega,q),$$
whence $\lambda=\underline{\lambda}^\beta(\Omega,\tau)$. All properties in part~1 of Theorem~\ref{th:fixedomega} have now been proved.

\vskip 0.3cm
\noindent{\it Part 2.} Notice that, for all $v\in L^{\infty}(\Omega)$, $\lambda_1^\beta(\Omega,v)\le \lambda_1^D(\Omega,v)$ by Lemma~\ref{lem:robindir}. Since
$$\sup_{v\in L^\infty(\Omega,\R^d),\,\|v\|_\infty\le\tau}\lambda_1^D(\Omega,v)<+\infty$$
by~\cite[Proposition~5.1]{BNV},~\cite[Theorem~1.5]{HNRpreprint} or~\cite[Theorem~6.6]{HNR}, if follows that the quantity $\overline{\lambda}^\beta(\Omega,\tau)$ defined in~\eqref{overlambda} is a real number. Then, arguments similar to those in part~1 above yield the conclusions of part~2.

\vskip 0.3cm
\noindent{\it Part 3.} Consider now the case $\Omega=\Omega^\ast$ and denote
$$\phi:=\varphi^\beta_{\Omega^\ast,\tau e_r}.$$
This function $\phi$ is positive in $\overline{\Omega^\ast}$, it is of class $W^{2,p}(\Omega^\ast)$ for all $1\le p<\infty$, and $\max_{\overline{\Omega^\ast}}\phi=1$. For any $\mathcal{R}\in O(d)$, the function $\phi\circ\mathcal{R}$ satisfies the same equation as~$\phi$ in $\Omega^\ast$ and the same boundary condition on $\partial\Omega^\ast$. The uniqueness of the pair of eigenvalue and principal normalized eigenfunction then entails that $\phi\circ\mathcal{R}=\phi$ in $\overline{\Omega^\ast}$ for any $\mathcal{R}\in O(d)$, that is, $\phi$ is radially symmetric in $\overline{\Omega^*}$. Let $R$ denote the radius of~$\Omega^\ast$. For any $\sigma\in(0,R]$, there holds
$$-\Delta\phi+\tau e_r\cdot\nabla\phi=\lambda_1^\beta(\Omega^\ast,\tau e_r)\phi>0$$
almost everywhere in $\{x:|x|\le\sigma\}$ and $\phi$ is constant on the sphere $\{y:|y| =\sigma\}$. The weak maximum principle then implies that $\phi(x)\ge\phi(y)$ for all $|x|\le|y|=\sigma$, and the Hopf lemma even yields $e_r\cdot\nabla\phi(y)<0$ for all $|y|=\sigma$. As a conclusion, $\phi$ is radially decreasing and
$$\tau e_r\cdot\nabla\phi=-\tau|\nabla\phi|$$
everywhere in $\overline{\Omega^\ast}\setminus\{0\}$ and the function $\phi$ then fulfills~\eqref{eqphi} in $\Omega^\ast$ with $\lambda:=\lambda^\beta_1(\Omega^\ast,\tau e_r)$. It then follows from the last result of part~1 of the present theorem that
$$\underline{\lambda}^\beta(\Omega^\ast,\tau)=\lambda^\beta_1(\Omega^*,\tau e_r),$$
and the uniqueness of the vector field minimizing $\lambda^\beta_1(\Omega^\ast,v)$ implies that $\underline{v}=\tau e_r$.

Similarly, by denoting $\psi:=\varphi^\beta_{\Omega^\ast,-\tau e_r}$, one proves similarly that $\psi$ is radially decreasing and one still has $\tau e_r\cdot\nabla\psi=-\tau|\nabla\psi|$, that is, $-\tau e_r\cdot\nabla\psi=\tau|\nabla\psi|$, everywhere in $\overline{\Omega^\ast}\setminus\{0\}$. Part~2 of the present theorem then implies that
$$\overline{\lambda}^\beta(\Omega^\ast,\tau)=\lambda^\beta_1(\Omega^*,-\tau e_r)$$
and $\overline{v}=-\tau e_r$. The proof of Theorem~\ref{th:fixedomega} is thereby complete.
\end{proof}


\SE{Proof of the minimization result} \label{sec:main}

Let us now prove Theorem \ref{th:main}. Arguing by contradiction, assume that the conclusion does not hold. There exist then a sequence $(\beta_k)_{k\in \N}$ of positive numbers such that $\lim_{k\rightarrow+\infty} \beta_k=+\infty$ and a sequence of vector fields $(v_k)_{k\in \N}$ such that, for all $k\in\N$, $\left\Vert v_k\right\Vert_{\infty}\le \tau$ and
\begin{equation} \label{eq:ineqlambdak}
\lambda_1^{\beta_k}(\Omega,v_k)<\lambda_1^{\beta_k}(\Omega^{\ast},\tau e_r)+\frac 1{k+1}.
\end{equation}
For all $k\in \N$, write
$$\varphi_k:=\varphi^{\beta_k}_{\Omega,v_k}\ \hbox{ and }\ \lambda_k:=\lambda_1^{\beta_k}(\Omega,v_k).$$
One has
$$
\left\{
\begin{array}{ll}
-\Delta\varphi_k+v_k\cdot\nabla\varphi_k=\lambda_k\varphi_k & \mbox{ a.e. in }\Omega,\\
\displaystyle\frac 1{\beta_k}\frac{\partial\varphi_k}{\partial\nu}=-\varphi_k& \mbox{ on }\partial\Omega.
\end{array}
\right.
$$
Lemma \ref{lem:robindir} shows that $\lambda_k\le \lambda_1^D(\Omega,v_k)$ for all $k\in\N$, while \cite[Proposition~5.1]{BNV} ensures that the sequence $(\lambda_1^D(\Omega,v_k))_{k\in \N}$ is bounded (recall that $\left\Vert v_k\right\Vert_{\infty}\le \tau$ for all~$k\in\N$). Furthermore, each $\lambda_k$ is a positive real number. Therefore, the sequence~$(\lambda_k)_{k\in\N}$ is bounded. Arguing as in the proof of Lemma \ref{lem:robindir}, one concludes that the sequence~$(\varphi_k)_{k\in \N}$ is then bounded in~$H^1(\Omega)$, which entails that the sequence $(\mbox{tr}(\varphi_k))_{k\in\N}$ is bounded in $H^{\frac 12}(\partial\Omega)$. Therefore, together with the boundedness of the sequence $(1/\beta_k)_{k\in\N}$, \cite[Theorem~15.2]{ADN} implies that the sequence $(\varphi_k)_{k\in \N}$ is bounded in $W^{2,2}(\Omega)$, and a bootstrap argument therefore shows that $(\varphi_k)_{k\in \N}$ is bounded in~$W^{2,p}(\Omega)$ for all $1\le p<\infty$. Thus, there exist $\mu\in\R$, $\varphi\in\bigcap_{1\le p<\infty}W^{2,p}(\Omega)$ and $f\in L^{\infty}(\Omega)$ such that, up to a subsequence,
$$
\lim_{k\rightarrow +\infty} \lambda_k=\mu,
$$
$$
\varphi_k\mathop{\rightharpoonup}_{k\to+\infty}\varphi\mbox{ weakly in }W^{2,p}(\Omega)\mbox{ and } \varphi_k\mathop{\to}_{k\to+\infty}\varphi\mbox{ strongly in }C^{1,\alpha}(\overline{\Omega})
$$
for all $1\le p<\infty$ and all $\alpha\in (0,1)$, and
$$
v_k\cdot\nabla\varphi_k\rightharpoonup f\mbox{ weakly-}\!\ast\mbox{ in }L^\infty(\Omega).
$$
Furthermore, as in the proof of Lemma~\ref{lem:robindir}, there holds $\mbox{tr}(\varphi_k)\to0$ as $k\to+\infty$ strongly in $L^2(\partial\Omega)$. One therefore has
$$
\left\{
\begin{array}{ll}
-\Delta\varphi+f=\mu\varphi & \mbox{a.e. in }\Omega,\\
\varphi\ge 0 & \mbox {in }\overline{\Omega},\\
\mbox{tr}(\varphi)=0 & \mbox{on }\partial\Omega,\\
\displaystyle\max_{\overline{\Omega}} \varphi=1
\end{array}
\right.
$$
and $f\ge -\tau\left\vert \nabla\varphi\right\vert$ a.e. in $\Omega$, so that
$$
-\Delta\varphi-\tau\left\vert \nabla\varphi\right\vert\le \mu\varphi\ \mbox{ a.e. in }\Omega.
$$
Define
$$
v(x):=\left\{
\begin{array}{ll}
\displaystyle -\tau\frac{\nabla\varphi(x)}{\left\vert \nabla\varphi(x)\right\vert} & \mbox{ if }\nabla\varphi(x)\ne 0,\\
0 & \mbox{ if }\nabla\varphi(x)=0,
\end{array}
\right.
$$
so that $\left\Vert v\right\Vert_\infty\le \tau$ and
$$
-\Delta\varphi+v\cdot\nabla\varphi\le \mu\varphi\ \mbox{ a.e. in }\Omega.
$$

Let now $\psi:=\varphi^D_{\Omega,v}$, so that $\psi>0$ in $\Omega$ and
$$
-\Delta\psi+v\cdot\nabla\psi=\lambda_1^D(\Omega,v)\psi\ \mbox{ a.e. in }\Omega.
$$
If $\mu<\lambda_1^D(\Omega,v)$, then
$$
-\Delta\varphi+v\cdot\nabla\varphi\le \mu\varphi\le \lambda_1^D(\Omega,v)\varphi\ \mbox{ a.e. in }\Omega,
$$
and \cite[Lemma 2.1]{HNRpreprint} implies that $\varphi=\psi$ in $\overline{\Omega}$, therefore $\mu=\lambda_1^D(\Omega,v)$, a contradiction. Finally,
$$\lambda_1^D(\Omega,v)\le \mu.$$
But \eqref{eq:ineqlambdak} and Lemma \ref{lem:robindir} imply that
$$
\mu\le \lambda_1^D(\Omega^{\ast},\tau e_r),
$$
and one therefore obtains
$$
\lambda_1^D(\Omega,v)\le \lambda_1^D(\Omega^{\ast},\tau e_r),
$$
which contradicts the ``equality'' statement in \cite[Theorem 1.1]{HNRpreprint} since $\Omega$ is not a ball. This concludes the proof of Theorem \ref{th:main}.\hfill$\Box$

\bibliographystyle{plain}
{\small{\bibliography{Biblio-HR-Robin}}}

\end{document}